\newcommand{\ie}{{i.e.}}
\newcommand{\area}{{\rm area}}
\newcommand{\NN}{\mathbb{N}} 
\def\E{{\rm E}}
\providecommand{\intd}[0]%
{\;\mbox{d}}
\let\oldnl\nl
\newcommand{\nonl}{\renewcommand{\nl}{\let\nl\oldnl}}
\def\TitleOfAlgo{\@ifnextchar({\@TitleOfAlgoAndComment}{\@TitleOfAlgoNoComment}}
\def\@TitleOfAlgoAndComment(#1)#2{\nonl\hspace*{-1.5em}#2 #1\;}
\def\@TitleOfAlgoNoComment#1{\nonl\hspace*{-1.5em}#1\;}
\newcommand{\later}[1]{}
\newcommand{\old}[1]{}
\title{Geometric Variants of the Gale--Berlekamp Switching Game}
\titlerunning{Geometric Variants of the Gale--Berlekamp Switching Game}
\author{Adrian Dumitrescu}
{Algoresearch L.L.C., Milwaukee, WI, USA}
{ad.dumitrescu@algoresearch.org}
{0000-0002-1118-0321}
{}
\authorrunning{Adrian Dumitrescu}
\keywords{maximum discrepancy, probabilistic method, lattice set, switching game}
\begin{document}
   
\maketitle

\begin{abstract}
  The Gale-Berlekamp switching game is played on the following device:
  $G_n=\{1,2,\ldots,n\} \times \{1,2,\ldots,n\}$
  is an $n \times n$ array of lights is controlled by $2n$ switches, one for each row or column.
  Given an (arbitrary) initial configuration of the board, the objective is to have as many lights on
  as possible.
  Denoting the maximum difference (discrepancy) 
  between the number of lights that are on minus the number of lights that are off
  by $F(n)$, it is known (Brown and Spencer, 1971) that
  $F(n)= \Theta(n^{3/2})$, and more precisely, that
  $F(n) \geq \left( 1+ o(1) \right) \sqrt{\frac{2}{\pi}} n^{3/2} \approx 0.797 \ldots n^{3/2}$. 
  Here we extend the game to other playing boards. For example:
  
  (i)~For any constant $c>1$,  if $c n$ switches are conveniently chosen, then the maximum discrepancy
  for the square board is $\Omega(n^{3/2})$.
From the other direction, suppose we fix any set of $a$ column switches, $b$ row switches,
where $a \geq b$ and $a+b=n$. Then the maximum discrepancy is at most $-b (n-b)$.
  
(ii)   A board $H \subset \{1,\ldots,n\}^2$, with area $A=|H|$, 
  is \emph{dense} if $A \geq c (u+v)^2$, for some constant $c>0$, where
  $u= |\{x \colon (x,y) \in H\}|$ and  $v=|\{y \colon (x,y) \in H\}|$. 
  For a dense board of area $A$, we show that the maximum discrepancy is  $\Theta(A^{3/4})$.
  This result is a generalization of the Brown and Spencer result for the original game.

(iii)  If $H$ consists of the elements of $G_n$ below the hyperbola $xy=n$, then its maximum discrepancy
  is $\Omega(n)$ and $O(n (\log n)^{1/2})$. 

  \end{abstract}

\section{Introduction} \label{sec:intro}

Consider a square $n \times n$ array of lights. Each light has two possible states, on and off.
To each row and to each column of the array there is a switch. Turning a switch changes the state of
each light in that row or column. Given an initial state of the board, \ie, a certain on or off position
for each light in the array, the goal is to turn on as many lights as possible.
Equivalently, the number of lights that are on minus the number of lights that are off
should be as large as possible when starting from the initial configuration. See Fig~\ref{fig:boards}.

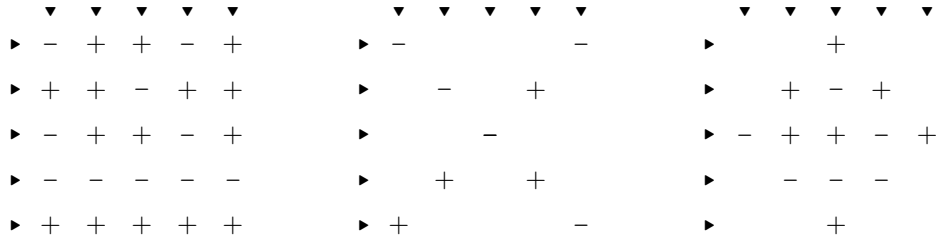
\begin{figure}[htbp]
 \centering 
 \begin{tikzpicture} [scale=0.6]
\draw (0.5,5.25) node{$\filledtriangledown$};
\draw (1.5,5.25) node{$\filledtriangledown$};
\draw (2.5,5.25) node{$\filledtriangledown$};
\draw (3.5,5.25) node{$\filledtriangledown$};
\draw (4.5,5.25) node{$\filledtriangledown$};
\draw (-0.3,0.5) node{$\filledtriangleright$};
\draw (-0.3,1.5) node{$\filledtriangleright$};
\draw (-0.3,2.5) node{$\filledtriangleright$};
\draw (-0.3,3.5) node{$\filledtriangleright$};
\draw (-0.3,4.5) node{$\filledtriangleright$};
\draw (0.5,0.5) node{+};
\draw (1.5,0.5) node{+};
\draw (2.5,0.5) node{+};
\draw (3.5,0.5) node{+};
\draw (4.5,0.5) node{+};
\draw (0.5,1.5) node{$\minus$};
\draw (1.5,1.5) node{$\minus$};
\draw (2.5,1.5) node{$\minus$};
\draw (3.5,1.5) node{$\minus$};
\draw (4.5,1.5) node{$\minus$};
\draw (0.5,2.5) node{$\minus$};
\draw (1.5,2.5) node{+};
\draw (2.5,2.5) node{+};
\draw (3.5,2.5) node{$\minus$};
\draw (4.5,2.5) node{+};
\draw (0.5,3.5) node{+};
\draw (1.5,3.5) node{+};
\draw (2.5,3.5) node{$\minus$};
\draw (3.5,3.5) node{+};
\draw (4.5,3.5) node{+};
\draw (0.5,4.5) node{$\minus$};
\draw (1.5,4.5) node{+};
\draw (2.5,4.5) node{+};
\draw (3.5,4.5) node{$\minus$};
\draw (4.5,4.5) node{+};
\end{tikzpicture}
 \hspace{10mm}
 \begin{tikzpicture} [scale=0.6]
\draw (0.5,5.25) node{$\filledtriangledown$};
\draw (1.5,5.25) node{$\filledtriangledown$};
\draw (2.5,5.25) node{$\filledtriangledown$};
\draw (3.5,5.25) node{$\filledtriangledown$};
\draw (4.5,5.25) node{$\filledtriangledown$};
\draw (-0.3,0.5) node{$\filledtriangleright$};
\draw (-0.3,1.5) node{$\filledtriangleright$};
\draw (-0.3,2.5) node{$\filledtriangleright$};
\draw (-0.3,3.5) node{$\filledtriangleright$};
\draw (-0.3,4.5) node{$\filledtriangleright$};
\draw (0.5,0.5) node{+};
\draw (1.5,1.5) node{+};
\draw (2.5,2.5) node{$\minus$};
\draw (3.5,3.5) node{+};
\draw (4.5,4.5) node{$\minus$};
\draw (0.5,4.5) node{$\minus$};
\draw (1.5,3.5) node{$\minus$};
\draw (2.5,2.5) node{$\minus$};
\draw (3.5,1.5) node{+};
\draw (4.5,0.5) node{$\minus$};

\end{tikzpicture}
  \hspace{10mm}
 \begin{tikzpicture} [scale=0.6]
\draw (0.5,5.25) node{$\filledtriangledown$};
\draw (1.5,5.25) node{$\filledtriangledown$};
\draw (2.5,5.25) node{$\filledtriangledown$};
\draw (3.5,5.25) node{$\filledtriangledown$};
\draw (4.5,5.25) node{$\filledtriangledown$};
\draw (-0.3,0.5) node{$\filledtriangleright$};
\draw (-0.3,1.5) node{$\filledtriangleright$};
\draw (-0.3,2.5) node{$\filledtriangleright$};
\draw (-0.3,3.5) node{$\filledtriangleright$};
\draw (-0.3,4.5) node{$\filledtriangleright$};
\draw (2.5,0.5) node{+};
\draw (1.5,1.5) node{$\minus$};
\draw (2.5,1.5) node{$\minus$};
\draw (3.5,1.5) node{$\minus$};
\draw (0.5,2.5) node{$\minus$};
\draw (1.5,2.5) node{+};
\draw (2.5,2.5) node{+};
\draw (3.5,2.5) node{$\minus$};
\draw (4.5,2.5) node{+};
\draw (1.5,3.5) node{+};
\draw (2.5,3.5) node{$\minus$};
\draw (3.5,3.5) node{+};
\draw (2.5,4.5) node{+};
\end{tikzpicture}
 \caption{Left: a square board whose signed discrepancy is $5$.
   Center: an $X$-shaped board whose signed discrepancy is $-1$; its maximum discrepancy is $5$.
   Right: a rotated square board.}
 \label{fig:boards}
\end{figure}

While at Bell Telephone Laboratories (Murray Hill, NJ) in the late 1960s, Edwyn Berlekamp constructed
a physical instance of this game for $n=10$ and it was a fixture in the tea room of the
Mathematics Department for some years. David Gale is also credited to have invented the game independently,
some time prior to 1971~\cite{BS71}. 

By a \emph{configuration} we shall mean the state of the machine, \ie, precisely which lights are on and off.  
Following Beck and Spencer~\cite{BS83}, define the \emph{signed discrepancy} of a configuration
as the number of lights that are on minus the number of lights that are off.
Equivalently this is just the algebraic sum of the weights $+1$ and $-1$ corresponding to on and off lights,
respectively.

Let the initial configuration be given by an $n \times n$ matrix $A =(a_{ij})$ with all $a_{ij}=\pm 1$,
where $a_{ij}= +1$ if the light in position $(i,j)$ of the board is on; otherwise $a_{ij}= -1$.
Let $G_n=\{1,2,\ldots,n\} \times \{1,2,\ldots,n\}$ be a square piece of the integer lattice,
representing the \emph{configuration}, or \emph{board} of the game. 
Set $x_i=-1$ if the switch for the $i$th row is pulled, otherwise $x_i=+1$.
Set $y_j=-1$ if the switch for the $j$th column is pulled, otherwise $y_j=+1$.   
Some reflection indicates that the order in which switches are pulled is irrelevant, it only matters
which ones are pulled. It is easily seen that the light in position $(i,j)$ has value $a_{ij} x_i y_j$,
as affected by the  line switches $x_i$ and $y_j$, whence the discrepancy is given by
\begin{equation} \label{eq:f(H)}
  f(H) = N_+ - N_- = \sum_{i=1}^n \sum_{j=1}^n  a_{ij} x_i y_j.
\end{equation}
Set now
\begin{equation} \label{eq:F(n)}
  F(n) = \min_{a_{ij}} \max_{x_i, y_j} \sum_{i=1}^n \sum_{j=1}^n  a_{ij} x_i y_j,
\end{equation}
which represents the \emph{maximum (signed) discrepancy} of the square board in the worst case.

Solving the game can also be interpreted as determining the covering radius of the so-called
light-bulb code of length $n^2$, as explained in~\cite{Sl87}. When Berlekamp introduced his game,
it was quickly seen that a brute force computational approach is infeasible even for $n=10$. 
However, by using symmetry, case elimination, and computer help, Fishburn and Sloane~\cite{FS89} 
found optimal solutions up to $n=10$; the solution for $n=10$ was subsequently corrected by
Carlson and Stolarski~\cite{CS04} who also verified the previous solutions for $n \leq 9$. 
The first few values of $F(n)$ are given in Table~\ref{tab:F(n)}.

\begin{table}[ht]
  \caption{The maximum discrepancy $F(n)$ for a square board of size $n$, for $n=2,\ldots,10$.}
\begin{center}
\begin{tabular}{|c||c|c|c|c|c|c|c|c|c|}
\hline
$n$ & $2$ & $3$ & $4$ & $5$ & $6$ & $7$ & $8$ & $9$ & $10$ \\
\hline \hline
   & $2$ & $5$ & $8$ & $11$ & $14$ & $17$ & $20$ & $27$ & $30$ \\
\hline
\end{tabular}
\end{center}
\label{tab:F(n)}
\end{table}

\subparagraph{Extending the game.}
The Gale-Berlekamp switching game can be extended as follows.
Let $H \subseteq G_n$ represent the \emph{configuration}, or \emph{board} of the game. 
The original version of the game corresponds to $H_0=G_n$.
Note that $|H_0|=n^2$, and $|H| \leq n^2$, for an arbitrary board. 
Given $H$, let $H_x$ and $H_y$ defined below represent the $x$- and $y$-\emph{projection}
of $H$, respectively; let $u$ and $v$ denote the sizes of these projections.
\begin{equation}  \label{eq:proj}
  H_x= \{x \colon (x,y) \in H\}, \ \  H_y = \{y \colon (x,y) \in H\}, \text{ and } u=|H_x|, v=|H_y|.
\end{equation}

Given an arbitrary board $H$, set
\begin{equation} \label{eq:F(H)}
  F(H) = \min_{a_{ij}} \max_{x_i, y_j} \sum_{i \in H_x} \sum_{j \in H_y}  a_{ij} x_i y_j,
\end{equation}
which represents the \emph{maximum discrepancy} of $H$.

\subparagraph{Our results.}
We first show how the game can be extended to arbitrary boards under the same principles.
We start by analyzing various modifications of the square board with respect to increasing
or decreasing the number of line switches.

Suppose now that additional line switches are provided: a \emph{line switch} acts
in the same way as a row or column switch.
With roughly $tn$ line switches (for some $t \in \NN$), the maximum discrepancy of the
square board is shown to be $\Theta(n^{3/2} t^{1/2})$ (Theorem~\ref{thm:add}).
In particular, for $t \sim n$, the maximum discrepancy is quadratic, \ie, the most one can hope for.

For any constant $c>1$, if $c n$ row and column switches are conveniently chosen, then
the maximum discrepancy is $\Omega(n^{3/2})$ (Theorem~\ref{thm:remove}).
From the other direction,  suppose we fix any set of $a$ column switches,
$b$ row switches, where $a \geq b$ and $a+b=n$. Then the maximum discrepancy
is at most $-b (n-b)$ (Theorem~\ref{thm:remove}). These bounds are essentially the best possible.

Further, we obtain a tight asymptotic bound on the maximum discrepancy for dense boards
(Theorem~\ref{thm:dense}).   This result is a generalization of the Brown and Spencer result
for the original game.
We then derive more precise bounds in  two specific instances
featuring an inscribed square and an inscribed disk.
The bounds in Theorems~\ref{thm:45} and~\ref{thm:disk} are special
cases of the bound in Theorem~\ref{thm:dense}.

In Section~\ref{sec:other} we consider two other boards that extend the above framework.
Firstly, we analyze a planar hyperbolic board that is not dense (Theorem~\ref{thm:hyperbola})
and so expectedly has a much higher signed discrepancy; here the resulting bounds differ but are
very close to each other.
Secondly, we derive a tight asymptotic bound for a $3$-dimensional version (Theorem~\ref{thm:cube}):
if $H=\{1,\ldots,n\}^3$ is the standard cubic piece of the integer lattice,
then its maximum discrepancy is $\Theta(n^{5/2})$ in the model with $3n^2$ line switches,
whereas it is known to be $\Theta(n^{2})$ in the model with $3n$ plane switches~\cite{AP19}.

\smallskip
It should be noted that overly precise board specifications w.r.t. the boundary structure
are irrelevant for the asymptotic results obtained in Theorems~\ref{thm:45},~\ref{thm:disk},
and~\ref{thm:hyperbola}. This is due to the boundary effects being negligible for the boards
considered there.

\subparagraph{Related work.}
Let $\Sigma=\{-1,1\}$. Given a matrix $M \in \Sigma^{n \times n}$, the problem of finding vectors
$x, y \in \Sigma^n$ maximizing $x^T M y$, \ie, solving the Gale-Berlekamp Switching Game, 
was shown to be $\NP$-hard by Roth and Viswanathan~\cite{RV08}. 
Karpinski and Schudy~\cite{KS09} designed a linear time approximation scheme for the
game and generalized it to a wider class of minimization problems
including the Nearest Codeword Problem (NCP) and Unique Games Problem (UGP).

Pellegrino, Silva, and Teixeira studied a continuous version of the game in which vectors replace
light bulbs and knobs substitute the discrete switches used in the original problem~\cite{PST23}.
A light switching game based on permutations with light bulbs in an
$n \times n$ formation, having $k$ different intensities has been considered
and analyzed by Brualdi and Meyer~\cite{BM15}. 

The dual problem of minimizing the discrepancy has been also considered.
About $60$ years ago, Leo Moser made the following conjecture (see the account in~\cite{BS83}):
Given any $n \times n$ matrix $A =(a_{ij})$ with all $a_{ij}=\pm 1$,
there exist $x_1,\ldots,x_n$, $y_1,\ldots,y_n = \pm 1$ so that, setting
\[ D = \left | \sum_{i=1}^n \sum_{j=1}^n x_i y_j a_{ij} \right |, \]
$D=1$, when $n$ is odd, and $D \leq 2$, when $n$ is even.
In 1969, Koml\'os and M. Sulyok~\cite{KS69} succeeded in proving this result for all
sufficiently large $n$.  The full conjecture was proved by Beck and Spencer in 1983~\cite{BS83}.  
Balanced colorings for lattice points with respect to axis-parallel lines have been considered by
Akiyama and Urrutia~\cite{AU90}.

\section{Preliminaries} \label{sec:prelim}

\subparagraph{A warm-up example.} 
Consider an $X$-shaped board like the one depicted in Fig.~\ref{fig:boards}\,(center),
and suppose that $n$ is odd, say, $n=2k+1$. Observe that the board is decomposable into 
$k$ nested axis-aligned concentric $4$-cycles (as squares) plus the center, and each of these
are controlled independently.
In each of these cycles, a discrepancy of $3-1=2$ can be achieved, so altogether (with the center),
a discrepancy of $2k+1=n$ can be achieved. Indeed, we show that at most two line switches suffice to
bring any $4$-cycle into $+1,+1,+1,+1$, or $+1,+1,+1,+-1$.

If a cycle is of the form $-1,-1,-1,-1$, two switches of the parallel lines
containing its vertices yield $+1,+1,+1,+1$.
If a cycle is of the form $+1,+1,+1,-1$, it is left unchanged;
moreover, this is the best possible, \ie, the $-1$ cannot be eliminated.
If it is of the form $+1,-1,+1,-1$, switching the last two elements for example, 
yields $+1,-1,-1,+1$ and switching the two middle elements finally yields $+1,+1,+1,+1$.

The case of even $n$ is solved using the same procedure.
It is easy to see that the upper estimate of $2$ per cycle is the best possible
(by the previous observation). We have thus shown the following:

\begin{proposition} \label{prop:X}
Let $H$ be an $X$-shaped board inscribed in $G_n$ (with $u=v=n$).
Then its maximum discrepancy is $F(H)=n$. 
\end{proposition}

\subparagraph{The original version of the game.}
We now recall the square board $G_n$ studied by Brown and Spencer~\cite{BS71}
and revisited by Spencer~\cite{Spe94}; as we will be relying and building upon the techniques
employed there.
For the upper bound, the authors of~\cite{BS71} used Hadamard matrices and employed a method of
Moon and Moser~\cite{MM66}. A subsequent probabilistic approach was offered by Spencer~\cite{Spe94}.
For the lower bound, the following well-known deviation inequality is used:
see~\cite[Chap.~5]{MU17}, or~\cite[Lecture~4]{Spe94}.

\begin{theorem} \label{thm:dev-S_n}
  Let $S_n = X_1 + \cdots + X_n$, where
  \[ \Pr \left[X_i =+1 \right] = \Pr \left[X_i =-1 \right] = \frac12 \]
  and the $X_i$ are mutually independent. Then for any $\lambda>0$
  \begin{equation} \label{eq:dev-S_n}
    \Pr \left[ S_n > \lambda \right] < \exp(-\lambda^2/2n). 
  \end{equation}
 \end{theorem}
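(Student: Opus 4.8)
The plan is to prove this by the standard exponential moment method (a Chernoff-type bound). First I would fix a parameter $t>0$ and apply Markov's inequality to the positive random variable $\exp(t S_n)$:
\[
  \Pr[S_n > \lambda] = \Pr\!\left[\exp(t S_n) > \exp(t\lambda)\right]
    \le \exp(-t\lambda)\, \E\!\left[\exp(t S_n)\right].
\]
Because the $X_i$ are mutually independent, the expectation factors as $\E[\exp(t S_n)] = \prod_{i=1}^n \E[\exp(t X_i)]$, so the task reduces to controlling the moment generating function of a single symmetric $\pm 1$ variable.

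A direct computation gives $\E[\exp(t X_i)] = \tfrac12(\e^{t} + \e^{-t}) = \cosh t$. The key inequality I would then invoke is $\cosh t \le \exp(t^2/2)$, valid for all real $t$; it follows from a term-by-term comparison of the Taylor series using $(2k)! \ge 2^k\, k!$, and is strict for $t \neq 0$. Combining this with the factorization yields $\E[\exp(t S_n)] \le \exp(n t^2/2)$, hence
\[
  \Pr[S_n > \lambda] < \exp\!\left(-t\lambda + \tfrac{n t^2}{2}\right),
\]
the strictness coming from the strict bound on $\cosh$ at positive $t$.

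It remains to optimize the free parameter. Minimizing the exponent $-t\lambda + n t^2/2$ over $t>0$ gives $t = \lambda/n$, at which the exponent equals $-\lambda^2/(2n)$; substituting this choice delivers the claimed bound $\Pr[S_n > \lambda] < \exp(-\lambda^2/2n)$. There is no real obstacle here, as this is a textbook estimate: the only quantitative input beyond routine calculus is the Gaussian-type bound $\cosh t \le \exp(t^2/2)$, and the optimizing value $t = \lambda/n$ can either be found by differentiation or simply guessed and verified. Applying the same argument to $-S_n$ yields the matching lower-tail bound by symmetry.
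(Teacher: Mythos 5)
Your proof is correct: the exponential moment (Chernoff) argument with $\E[\exp(tX_i)]=\cosh t \le \exp(t^2/2)$ and the optimal choice $t=\lambda/n$ is exactly the standard derivation, and your observation that the strictness of $\cosh t < \exp(t^2/2)$ for $t\neq 0$ yields the strict inequality is sound. The paper itself does not prove this theorem --- it cites it as a well-known deviation inequality (Mitzenmacher--Upfal, Chap.~5; Spencer, Lecture~4) --- and your argument is precisely the one given in those references, so there is nothing to reconcile.
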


\medskip
Further, it is known~\cite[Lecture 6]{Spe94} that
\[ \E[|S_n|] = n \cdot 2^{1-n} {n-1 \choose \lfloor (n-1)/2 \rfloor}. \]
Using Stirling's approximation for the factorial yields
\begin{equation} \label{eq:asy-S_n}
\E[|S_n|] =\left( 1+ o(1) \right)  \sqrt{\frac{2}{\pi}} n^{1/2}.
\end{equation}

Our starting point for our results in Sections~\ref{sec:dense} and~\ref{sec:special}
is a proof for the original version of the game with a square board.
In particular, we refer to the proof in~\cite[Lecture 6]{Spe94}. We include it here
for completeness.

\begin{theorem} \label{thm:square}  {\rm (Brown and Spencer~\cite{BS71})}.
  Let $H=G_n$.
  Then the maximum discrepancy $F(H)$ is $\Theta(n^{3/2})$, and more precisely,
\begin{align} 
F(H) &\leq c n^{3/2}, \text{ where } c = 2 (\ln{2})^{1/2}, \text{ and }  \label{eq:ub-square} \\
F(H) &\geq \left( 1+ o(1) \right) \sqrt{\frac{2}{\pi}} n^{3/2}. \label{eq:lb-square} 
\end{align}
\end{theorem}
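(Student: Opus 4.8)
The plan is to establish the two bounds by the probabilistic method, handling the adversary's matrix $A$ and the player's switches $(x_i),(y_j)$ by separate arguments. The overall shape is that the upper bound comes from a union bound over all switch settings applied to a random matrix, while the lower bound comes from a single expectation computation applied to an arbitrary matrix.

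For the upper bound \eqref{eq:ub-square}, I would choose the entries $a_{ij}$ independently and uniformly from $\{-1,+1\}$ and show that with positive probability no switch setting beats $c\,n^{3/2}$. Fix a setting $(x_i),(y_j)\in\{-1,+1\}^n$. Since the $a_{ij}$ are independent uniform signs, the products $b_{ij}:=a_{ij}x_iy_j$ are again independent uniform signs, so the discrepancy $\sum_{i,j}a_{ij}x_iy_j$ is distributed exactly as $S_{n^2}$. Theorem~\ref{thm:dev-S_n}, applied with $n^2$ in place of $n$, then gives $\Pr\!\left[\sum_{i,j}a_{ij}x_iy_j>\lambda\right]<\exp(-\lambda^2/(2n^2))$. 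Taking a union bound over all $2^{2n}$ switch settings, the probability that some setting exceeds $\lambda$ is below $2^{2n}\exp(-\lambda^2/(2n^2))$, which is $<1$ as soon as $\lambda^2>4n^3\ln 2$, i.e.\ $\lambda=2(\ln 2)^{1/2}n^{3/2}$. Hence some matrix $A$ admits no switch setting with discrepancy exceeding $c\,n^{3/2}$, which gives $F(H)=\min_A\max_{x,y}\sum_{i,j}a_{ij}x_iy_j\le c\,n^{3/2}$.

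For the lower bound \eqref{eq:lb-square}, I would fix an arbitrary matrix $A$ and let the player choose the column switches $y_j$ uniformly at random, then respond optimally in each row. Writing $R_i=\sum_{j=1}^n a_{ij}y_j$, the player sets $x_i=\operatorname{sign}(R_i)$, collecting $|R_i|$ from row $i$ and achieving total discrepancy $\sum_{i=1}^n|R_i|$. For each fixed $i$, the terms $a_{ij}y_j$ are independent uniform signs over $j$, so $R_i$ has the distribution of $S_n$ and $\E[|R_i|]=\E[|S_n|]$. By linearity of expectation, $\E\!\left[\sum_i|R_i|\right]=n\,\E[|S_n|]=(1+o(1))\sqrt{2/\pi}\,n^{3/2}$ by \eqref{eq:asy-S_n}. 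Some realization of $(y_j)$ attains at least this expectation, so for every $A$ the player can guarantee discrepancy $(1+o(1))\sqrt{2/\pi}\,n^{3/2}$; taking the minimum over $A$ yields \eqref{eq:lb-square}.

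Both arguments are short once the distributional observations are in place, so I do not expect a genuine obstacle; the points I would check most carefully are the two bits of bookkeeping. First, the union bound must be tuned so that the threshold comes out to the exact constant $c=2(\ln 2)^{1/2}$, which hinges on balancing the $2^{2n}=\exp(2n\ln 2)$ term against $\exp(-\lambda^2/(2n^2))$. Second, in the lower bound it is worth stressing that linearity of expectation suffices even though the rows $R_i$ share the same random vector $(y_j)$ and are therefore dependent, so no independence across rows is needed and the per-row identity $\E[|R_i|]=\E[|S_n|]$ is all that the computation requires.
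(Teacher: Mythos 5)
Your proposal is correct and follows essentially the same route as the paper's proof: a uniformly random matrix combined with the deviation inequality and a union bound over all $2^{2n}$ switch settings for the upper bound, and random column switches followed by sign-matching row switches with linearity of expectation for the lower bound. Your explicit remark that the dependence among the rows $R_i$ is harmless (only linearity of expectation is needed) is a point the paper leaves implicit, but the argument is the same.
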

\begin{proof}
  \emph{Upper bound.}
 We employ the basic probabilistic method~\cite{AS16}.
  Let $a_{ij}=\pm1$ be chosen independently and at random with equal probability.
  For any fixed choice $x_i,y_j$, the terms $a_{ij} x_i y_j$ are mutually independent
  and equal to $\pm1$ with the same probability.  This means that 
  $\sum_{i=1}^n \sum_{j=1}^n  a_{ij} x_i y_j$ has distribution $S_{n^2}$.
  Recall that $c = 2 (\ln{2})^{1/2}$ and set $\lambda = c n^{3/2}$.
  Applying the deviation inequality~\eqref{eq:dev-S_n} yields
  \[ \Pr \left[ \sum_{i=1}^n \sum_{j=1}^n  a_{ij} x_i y_j > \lambda \right]
  < \exp(-\lambda^2/2n^2) = 2^{-2n}. \]
  There are $2^{2n}$ choices for $x_i,y_j$, thus by the union bound we obtain
  \[ \Pr \left[ \sum_{i=1}^n \sum_{j=1}^n  a_{ij} x_i y_j > \lambda \text{ for some } x_i, y_j \right]
  < 2^{2n} \cdot 2^{-2n} =1. \]
  Consequently, there exists a matrix
  $[a_{ij}]$ such that $\sum_{i=1}^n \sum_{j=1}^n  a_{ij} x_i y_j  \leq \lambda = c n^{3/2}$ for all $x_i,y_j$.

  \smallskip
  \emph{Lower bound.} 
Let $A=[a_{ij}]$ be fixed. Some of the switches are set in a probabilistic way, whereas
others are set deterministically, as follows. Let the column switches $y_j=\pm1$ be selected at random and let
\[ R_i = \sum_{j=1}^n  a_{ij} y_j \]
be the ``scrambled'' $i$th row, namely the $i$th row after the switches $y_j$.
For a fixed $i$, the row $(a_{i1} y_1, a_{i2} y_2,\ldots,  a_{in} y_n)$ can be any of the
$2^n$ possibilities, equally likely. Hence the distribution of the row is uniform irrespective of the
row's initial value, and $R_i$ has distribution $S_n$. It follows that
\[ \E \left[ |R_i| \right] = \left( 1+ o(1) \right) \sqrt{\frac{2}{\pi}} n^{1/2}. \]
Note that by the linearity of expectation, we have
\[ \E \left[ \sum_{i=1}^n |R_i| \right] = \sum_{i=1}^n  \E \left[ |R_i| \right]. \]

Suppose that we already have obtained (by our random choices) these inequalities
for each row of the matrix, as guaranteed by their expectations.  
Set now the row switches $x_i$ such that $x_i R_i = |R_i|$ for every $i$, \ie, such that
the row discrepancies result in a cumulative gain. Then
\[ \sum_{i=1}^n \sum_{j=1}^n  a_{ij} x_i y_j   = \sum_{i=1}^n x_i R_i = \sum_{i=1}^n |R_i|
= \left( 1+ o(1) \right) \sqrt{\frac{2}{\pi}}  n^{3/2}.     \qedhere  \] 
\end{proof}

\section{Modifications of the square board} \label{sec:modifications}

In this section we address questions of the following nature:

\begin{enumerate}  \itemsep 2pt

\item What is the maximum discrepancy achievable for the square board,
  when adding more switches?

\item What is the maximum discrepancy achievable under budget constraints?
  For instance, with only $n$ line switches?

\end{enumerate}

\subsection{Upgrades: adding switches} \label{sec:add}

Presumably the discrepancy can be increased by adding more switches to the board.
We start by showing that sometimes this is the case.
For instance, let $H$ be the square board $G_n$ equipped with $2n-1$ switches
along the diagonal lines of unit slope and $n$ column switches; $3n-1$ switches in total. 

In the first step, select the column switches $y_i=\pm1$ at random and observe that
each diagonal line $\ell$ has distribution $S_{|\ell \cap G_n|}$ regardless of the initial
values of the matrix entries. As in the proof of Theorem~\ref{thm:square}, we obtain
\[ F(H) \geq \left( 1+ o(1) \right) \sqrt{\frac{2}{\pi}} \left( 2\sum_{i=1}^n \sqrt{i} - \sqrt{n} \right)=
\frac43 \sqrt{\frac{2}{\pi}}  \left( 1+ o(1) \right) n^{3/2}. \]
This represents an increase by a factor of $4/3$ from the original version. 
For the upper bound, one gets an asymptotically matching bound by requiring
$\exp\left(\frac{-\lambda^2}{2A}\right) = 2^{3n}$.
Adding more switches allows for a proportional increase in discrepancy, as shown below.

\begin{theorem} \label{thm:add}  
Let $t <n$ be a positive integer and $H$ be the square board $G_n$ with
$n$ switches on the horizontal lines, and a switch on each line of slope $t$
that is incident to some point of $G_n$, see Fig.~\ref{fig:t-lines}. 
Then the maximum discrepancy of $H$ is $\Theta(n^{3/2} t^{1/2})$.
\end{theorem}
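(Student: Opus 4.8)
The plan is to establish the two matching bounds $F(H)=\Omega(n^{3/2}t^{1/2})$ and $F(H)=O(n^{3/2}t^{1/2})$ separately, by transplanting the two halves of the proof of Theorem~\ref{thm:square}, with the lines of slope $t$ taking over the role played by the unit-slope diagonals in the warm-up computation just before this theorem. The genuinely new ingredient, and the technical heart of the argument, is a lattice-point count describing how the $n^2$ cells of $G_n$ are distributed among the slope-$t$ lines; everything else is bookkeeping on top of the probabilistic machinery already developed. Write $\ell\cap G_n$ for the cells of a slope-$t$ line lying in the board, and let $L$ be the number of such lines meeting $G_n$.

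For the lower bound, fix the adversarial matrix $A=[a_{ij}]$ and select the $n$ horizontal switches independently and uniformly at random, leaving the slope-$t$ switches for a deterministic final adjustment. Since the slope is nonzero, distinct cells of a line $\ell$ have distinct $y$-coordinates, so the horizontal switches acting on $\ell$ are pairwise distinct and independent; consequently the scrambled line sum $R_\ell$ is distributed as $S_{|\ell\cap G_n|}$, irrespective of the initial entries along $\ell$. Exactly as in Theorem~\ref{thm:square}, setting each slope-$t$ switch to the sign of its $R_\ell$ produces total discrepancy $\sum_\ell|R_\ell|$, and by linearity of expectation together with the uniform estimate $\E[|S_m|]=\Theta(\sqrt m)$ (which follows from the exact formula preceding \eqref{eq:asy-S_n}), some choice of the random switches attains $\sum_\ell|R_\ell|\ge c\sum_\ell\sqrt{|\ell\cap G_n|}$ for an absolute constant $c>0$.

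It then remains to evaluate $\sum_\ell\sqrt{|\ell\cap G_n|}$. Indexing each line by its intercept $c=y-tx$, the quantity $|\ell\cap G_n|$ is the number of integers $x\in\{1,\dots,n\}$ with $1\le tx+c\le n$; the intercepts range over $(t+1)(n-1)+1=\Theta(tn)$ values, so $L=\Theta(tn)$. I split the lines into a \emph{bulk} range of $c$, where both endpoints of the admissible $x$-interval are interior (giving $\approx(t-1)n$ lines each of size $\Theta(n/t)$), and two \emph{corner} ranges, where as $c$ steps by $t$ the size decreases by $1$, so each value $s=1,2,\dots,\Theta(n/t)$ occurs about $t$ times. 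The bulk contributes $\Theta\bigl(tn\cdot\sqrt{n/t}\bigr)=\Theta(t^{1/2}n^{3/2})$ and the two corners contribute $\Theta\bigl(t\sum_{s\le n/t}\sqrt s\bigr)=\Theta(t^{-1/2}n^{3/2})$; the identity $\sum_\ell|\ell\cap G_n|=n^2$ confirms the accounting, and for $t\ge1$ the bulk dominates, yielding $\sum_\ell\sqrt{|\ell\cap G_n|}=\Theta(n^{3/2}t^{1/2})$. (At $t=1$ the bulk is empty and this reduces to the warm-up value $2\sum_{i=1}^n\sqrt i-\sqrt n=\Theta(n^{3/2})$.)

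For the upper bound I apply the basic probabilistic method as in Theorem~\ref{thm:square}: let the $a_{ij}=\pm1$ be independent and uniform. For any fixed setting of the $n$ horizontal and $L$ slope-$t$ switches, each cell contributes an independent uniform $\pm1$, so the discrepancy is distributed as $S_{n^2}$, and \eqref{eq:dev-S_n} gives $\Pr[\text{discrepancy}>\lambda]<\exp(-\lambda^2/2n^2)$. There are $2^{\,n+L}$ switch configurations, so the union bound keeps the probability below $1$ as soon as $\lambda^2>2(n+L)\ln 2\cdot n^2$; since $n+L=\Theta(tn)$, the choice $\lambda=\Theta(n^{3/2}t^{1/2})$ suffices and produces a matrix on which every configuration has discrepancy $O(n^{3/2}t^{1/2})$. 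I expect the main obstacle to be the bookkeeping in the line-size count of the third paragraph—delimiting the bulk and corner ranges precisely and controlling the floor/ceiling fluctuations so the size is genuinely $\Theta(n/t)$ throughout the bulk—rather than the two probabilistic arguments, which carry over almost verbatim from Theorem~\ref{thm:square}.
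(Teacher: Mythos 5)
Your proposal is correct and follows essentially the same route as the paper: random horizontal switches followed by a greedy setting of the slope-$t$ switches for the lower bound, and the probabilistic method with a union bound over the $2^{\Theta(nt)}$ switch configurations for the upper bound. The only substantive difference is that you work out in detail the lattice-point count $\sum_\ell \sqrt{|\ell \cap G_n|} = \Theta(n^{3/2}t^{1/2})$ (bulk versus corner lines), which the paper asserts only implicitly by writing $F(H) = \Omega(nt)\cdot\sqrt{n/t}$.
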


\begin{figure}[htbp]
\centering
\includegraphics[scale=0.75]{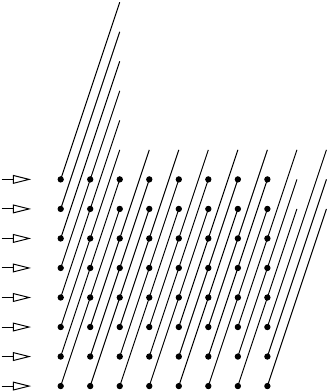}
\caption{Switches on slanted lines of slope $t=3$ and horizontal lines.}
\label{fig:t-lines}
\end{figure}
\begin{proof}
  There are $nt+n-t$ slanted lines and so there are $\Theta(nt)$ switches on these lines.
  Indeed, a point of $G_n$ is the lower left end of a slanted line if and only if it belongs
  to the lowest $t$ rows  or the first column. 
In the first step, select the row switches $x_i=\pm1$ at random and observe that
each slanted line $\ell$ has distribution $S_{|\ell \cap G_n|}$.
In the second step, select the switches on the slanted lines so as to add up
their absolute values and obtain
\[ F(H) = \Omega(nt) \cdot \sqrt{\frac{n}{t}} = \Omega(n^{3/2} t^{1/2}). \]

For the upper bound, requiring $\exp\left(\frac{-\lambda^2}{2n^2}\right) = 2^{-nt}$,
yields a matching asymptotic upper bound, namely $F(H) = O(n^{3/2} t^{1/2})$.
\end{proof}

\subsection{Downgrades: removing switches} \label{sec:remove}

Suppose that we have $a \leq n$ column switches and $b \leq n$ row switches, and assume, as we may,
that $a+b <2n$ and $a \geq b$. We first show that the number of switches, $2n$, in the standard version
is tight in a certain sense: if, say, $1.99 n$ switches are chosen in an adversarial way, then the maximum
discrepancy is at most $- cn^2$, for some constant $c>0$, that is, almost the worst it can be.   
On the other hand, if, say, $1.01 n$ switches are conveniently chosen,  then the maximum
discrepancy is $\Omega(n^{3/2})$,  that is, almost the best it can be. 
Finally, if only $n$ switches are allowed, then the maximum discrepancy is zero, at best.
Consider for instance the board with $n$ column switches; then the maximum discrepancy is zero.
Indeed, one can always set the switches to make each column non-negative. On the other hand,
for a zero-sum matrix consisting of zero-sum columns, this is the best possible. 
The scenarios we considered are summarized in the following.

\begin{theorem} \label{thm:remove}  
  Let $H$ be the square board $G_n$ with $a$ column switches and $b \leq a$ row switches,
  see Fig.~\ref{fig:ab}\,(left), and $\delta>0$ be a constant.
  
\begin{enumerate}  [(i)] \itemsep 2pt

\item If $a=n$, and $b \geq \delta n^{1/2}$, for a sufficiently large $\delta$, then the maximum discrepancy
  of $H$ is $\Omega(b n^{1/2}) =\Omega(n)$. In particular, if $b \geq \delta n$, then 
  $F(H) \geq c(\delta) n^{3/2}$, where $c(\delta)>0$. 

\item If $a=b=(1-\delta)n$, then the maximum discrepancy of $H$ is
  $F(H) \leq -c(\delta) n^2$, where $c(\delta)>0$. 

\item Fix any set of $n$ switches, where $a \geq b$ (and $a+b=n$). Then $F(H) \leq -b (n-b)$.
  In particular, if $b \geq \delta n$, then $F(H) \leq -c(\delta) n^{2}$, where $c(\delta)>0$. 

\end{enumerate}

Apart from the values of the constants, these bounds cannot be improved.
\end{theorem}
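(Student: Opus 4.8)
The plan is to organize all three parts around the partition of $G_n$ induced by the available switches. Write $R,R'$ for the rows that do, respectively do not, carry a switch, and $C,C'$ likewise for columns, so $|R|=b$, $|R'|=n-b$, $|C|=a$, $|C'|=n-a$. This splits the board into four blocks: $A=R\times C$ (both coordinates switchable), $B=R\times C'$, $C_0=R'\times C$, and $D=R'\times C'$ (neither switchable). The entries of $D$ are unaffected by any switch, so their contribution $\Delta_D=\sum_{(i,j)\in D}a_{ij}$ is a constant of the configuration. The single fact that drives both the lower bounds in (i) and the tightness claim is that, for random switches ($x_i,y_j$ uniform in $\{\pm1\}$ on the switchable lines, $+1$ elsewhere), every product $x_iy_j$ with $i\in R$ or $j\in C$ has mean $0$, whence $\E[\mathrm{disc}]=\Delta_D$; consequently the player can always guarantee $\max_{x,y}\mathrm{disc}\ge\Delta_D\ge-|D|$.

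For (i) I would run the probabilistic method with the two classes of switches in the opposite order to the proof of Theorem~\ref{thm:square}. Here $a=n$, so every column carries a switch. Set the $b$ switchable row switches $x_i$ at random and look at the column sums $C_j=\sum_{i\in R}a_{ij}x_i+\sum_{i\in R'}a_{ij}$. The random part is distributed as $S_b$, and since a symmetric variable satisfies $\E|S_b+t|\ge\E|S_b|$ for every real shift $t$, we get $\E|C_j|\ge\E|S_b|=(1+o(1))\sqrt{2/\pi}\,b^{1/2}$ for each of the $n$ columns (here $b\ge\delta n^{1/2}$ with $\delta$ large is exactly what forces $b\to\infty$, so that the asymptotic form of $\E|S_b|$ applies). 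Setting each column switch $y_j$ to the sign of $C_j$ turns the discrepancy into $\sum_{j=1}^n|C_j|$, whose mean is at least $(1+o(1))\sqrt{2/\pi}\,n b^{1/2}$ by linearity. The probabilistic method then supplies a choice of switches achieving this value, giving $F(H)=\Omega(n b^{1/2})\ge\Omega(b n^{1/2})$, which specializes to $\Omega(n)$ for $b=\Theta(n^{1/2})$ and to $c(\delta)n^{3/2}$ for $b\ge\delta n$.

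For the upper bounds (ii) and (iii) I would exhibit one adversarial matrix. Put every entry of $D$ equal to $-1$, so $\Delta_D=-|D|=-(n-b)(n-a)$, which equals $-b(n-b)$ in case (iii) since $a=n-b$. Next neutralize the partially switchable blocks: choose the entries of $B$ so that each switchable row sums to $0$ over $C'$, and the entries of $C_0$ so that each switchable column sums to $0$ over $R'$ (possible up to parity, adjusting $O(n)$ entries); then $B$ and $C_0$ contribute identically $0$ for every setting of the switches, so the switches act only on $A$. Finally fill $A$ with a Brown--Spencer--hard matrix, so that by the upper bound of Theorem~\ref{thm:square} applied to the $b\times a$ board $A$ the player can extract at most $O\!\big(\max(a,b)\,\min(a,b)^{1/2}\big)=O(n^{3/2})$ there. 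Hence $\max_{x,y}\mathrm{disc}=\Delta_D+\max_{x,y}(\text{contribution of }A)\le-(n-b)(n-a)+O(n^{3/2})$. For $a=b=(1-\delta)n$ this is $-\delta^2n^2+O(n^{3/2})\le-c(\delta)n^2$, proving (ii); for $a+b=n$ with $b\ge\delta n$ it is $-b(n-b)+O(n^{3/2})\le-c(\delta)n^2$, proving the quantitative part of (iii).

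The subtle point, and the main obstacle, is block $A$: because it is fully switchable, $\max_{x,y}(\text{contribution of }A)\ge\E(\cdots)=0$, so the player can never be pushed strictly below $\Delta_D$. Combined with the averaging inequality $\max_{x,y}\mathrm{disc}\ge\Delta_D\ge-b(n-b)$, this confines the value to $-b(n-b)\le F(H)\le-b(n-b)+O\!\big((n-b)b^{1/2}\big)$ in case (iii); thus $-b(n-b)$ is the leading term, the residual being of lower order precisely when $b\to\infty$, which is all the asymptotic assertions of (ii) and (iii) require. The same averaging inequality gives the matching lower bounds in the final sentence (the discrepancy cannot be forced below $-|D|$), while for (i) a union bound over the $2^{n+b}$ switch settings, applied to a random $\pm1$ matrix via \eqref{eq:dev-S_n}, yields the complementary ceiling $F(H)=O(n^{3/2})$. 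Hence, up to the values of the constants, none of the three estimates can be improved; controlling the unavoidable positive residual from the fully switchable block is the one place where care is needed.
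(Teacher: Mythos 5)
Your parts (i) and (ii) are correct. Part (i) takes a genuinely different route from the paper: the paper randomizes the $n$ column switches and harvests $|R_i|$ only over the $b$ controlled rows, paying an expected $O(n)$ loss on the $n-b$ uncontrolled rows (which is exactly why it needs $b \geq \delta n^{1/2}$ with $\delta$ large); you instead randomize the $b$ row switches, absorb the uncontrolled rows into each column sum as a fixed shift via the symmetric-shift inequality $\E|S_b+t|\geq \E|S_b|$, and then optimize all $n$ column switches. This yields $\Omega(n b^{1/2})$, which is stronger than the stated $\Omega(b n^{1/2})$ (since $b\leq n$), has no error term to subtract, and works for every $b\geq 1$. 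Part (ii) is essentially the paper's argument in different clothing: the paper puts random entries everywhere outside the dead $\delta n\times \delta n$ block and applies one union bound over all $2^{a+b}$ switch settings, while you neutralize the two half-controlled blocks with zero-sum lines (your parity caveat is fine) and place a hard matrix on the fully switchable block; note that the rectangular hard matrix is not literally Theorem~\ref{thm:square} but requires rerunning its union bound for a $b\times a$ board, which is routine. Both roads end at $-\delta^2 n^2 + O(n^{3/2})$.

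The divergence is part (iii), and there your version is the defensible one. You prove only $F(H) \leq -b(n-b)+O(n^{3/2})$, not the literal claim $F(H)\leq -b(n-b)$; but your own averaging observation shows the literal claim is false for $b\geq 1$: for any matrix, the maximum of the discrepancy over switch settings is at least its average, which equals the dead-block contribution $\Delta_D \geq -b(n-b)$, and equality would force the discrepancy to be constant in the switches --- impossible, since the $\pm 1$ entries of the fully switchable block are nonzero coefficients of the monomials $x_i y_j$. (Quantitatively, scramble-and-greedy inside that block even gives $F(H)\geq -b(n-b)+\Omega(b\sqrt{n-b})$.) The paper's own proof of (iii) stumbles exactly on what you call the main obstacle: its construction places all $-1$ on the fully switchable $b\times a$ block, and a player who pulls all $a$ column switches and no row switch flips all $ab=b(n-b)$ of those entries, a gain of $2b(n-b)$ rather than the $b(n-b)$ the paper counts (each flip is worth $2$); so that matrix only certifies $F(H)\leq 0$. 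Your relaxed bound suffices for the ``in particular'' assertion $F(H)\leq -c(\delta)n^2$, and together with your $\Delta_D$ floor it pins $F(H)$ to $-b(n-b)$ up to lower-order terms, which is the correct form of the statement. Your closing tightness remarks (the $-|D|$ floor for (ii)--(iii) and the $O(n^{3/2})$ ceiling for (i)) are also sound, whereas the paper gives no explicit argument for its final sentence.
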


\begin{figure}[htbp]
\centering
\includegraphics[scale=0.8]{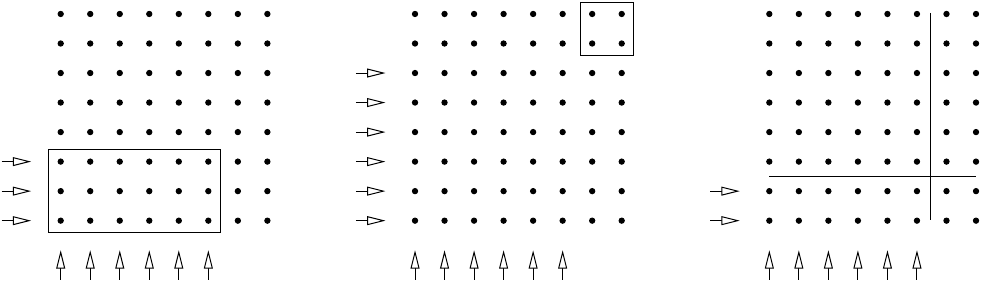}
\caption{A square board with $n=8$. Left: $a=6, b=3$ ($9$ switches).
  Center: $a=b=6$ ($12$ switches). Right: $a=6, b=2$ ($8$ switches).}
\label{fig:ab}
\end{figure}

\begin{proof}
  By symmetry, we may assume that the available switches control the lower left submatrix
  with $a$ columns and $b$ rows, as in Fig.~\ref{fig:ab}.

  \smallskip
  (i) First scramble the rows using the $n$ column switches uniformly at random and then
  add the absolute values over the $b$ bottom rows using the corresponding row switches accordingly.
  The expected absolute value of each row (including the $b$ controlled by switches) is $\sim n^{1/2}$.
  Observe that $\sum a_{ij} x_i y_j$ over the top $n-b$ rows has distribution $S_{n(n-b)}$, thus
  the expected absolute value of this sum is $O(\sqrt{n(n-b)}) = O(n)$. Overall, we obtain
  \[ F(H) = \Omega(b n^{1/2}) - O(n) = \Omega(b n^{1/2}). \]

 \smallskip
 (ii)  Set each element in the top right submatrix $A'$ of $A$ (not controlled by any switches) to $-1$,
 and choose every other element of $A$ uniformly and random from $\{-1,1\}$.
 Refer to Fig.~\ref{fig:ab} \,(center).
  For any fixed choice $x_i,y_j$, the terms $a_{ij} x_i y_j$ are mutually independent
  and equal to $\pm1$ with the same probability.  This means that 
  $\sum a_{ij} x_i y_j$ over $A \setminus A'$ has distribution $S_{(1-\delta^2)n^2}$,
  and a union bound (as in the upper bound argument in the proof of Theorem~\ref{thm:square})
  implies that there exists a matrix with $\sum_{A \setminus A'} a_{ij} x_i y_j = O(n^{3/2})$.  As such,
  \[ \sum_{A} a_{ij} x_i y_j =  \sum_{A \setminus A'} a_{ij} x_i y_j + \sum_{A'} a_{ij} 
  = O(n^{3/2})  -\delta^2 n^2 \leq -c(\delta) n^2, \]
  where $c(\delta)>0$. 

  \smallskip
  (iii) The case $b=0$ was discussed before the theorem. Assume now that $b \geq 1$
  and refer to Fig.~\ref{fig:ab} \,(right).
  Let the upper-left and lower-right submatrices of $A$ be filled with $\pm 1$ in a chessboard pattern,
  and let the other two submatrices of $A$ be filled with $-1$. Initially,  we have $\sum a_{ij} =-2b(n-b)$,
  and after the switches we have $\sum a_{ij} x_i y_j \leq  -2b(n-b) + b(n-b) = -b(n-b)$, since at most
  $b(n-b)$ entries can be affected by two switches, whereas switches in the submatrices with chessboard pattern
  do not influence the result.
\end{proof}

\section{Dense boards} \label{sec:dense}

In this section we obtain a tight asymptotic bound on the maximum discrepancy of a dense board
and thereby generalize Theorem~\ref{thm:square}.

\begin{theorem} \label{thm:dense}
  Let $H$ be a dense board, with $A=\area(H) \geq c (u+v)^2$, for some constant $c>0$. 
  Then its maximum discrepancy satisfies $F(H) \leq c_1 A^{3/4}$, for some constant $c_1>0$ that depends on $c$.
  Apart from the multiplicative constant, the above inequality is the best possible:
  $F(H) \geq c_2 A^{3/4}$, for some constant $c_2>0$ that depends on $c$.
\end{theorem}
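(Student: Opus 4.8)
The plan is to reproduce the two halves of the proof of Theorem~\ref{thm:square}, with the area $A$ playing the role of $n^2$ and the total number of effective switches, $u+v$, playing the role of $2n$. Only the switches sitting on nonempty rows and columns matter, so there are exactly $u+v$ of them, and the density hypothesis $A \ge c(u+v)^2$ is just the statement $u+v \le c^{-1/2}A^{1/2}$, which is precisely what keeps the switch count small relative to $A$.

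For the upper bound I would choose the entries $a_{ij}$, $(i,j)\in H$, independently and uniformly in $\{-1,+1\}$. For any fixed setting of the $u+v$ switches each $a_{ij}$ occurs exactly once in $\sum_{(i,j)\in H} a_{ij}x_iy_j$, so this sum is distributed as $S_A$; the deviation inequality~\eqref{eq:dev-S_n} gives $\Pr[S_A > \lambda] < \exp(-\lambda^2/2A)$. A union bound over the $2^{u+v}$ switch settings succeeds as soon as $2^{u+v}\exp(-\lambda^2/2A) < 1$, i.e.\ $\lambda^2 > 2A(u+v)\ln 2$. Substituting $u+v \le c^{-1/2}A^{1/2}$ makes $\lambda = \sqrt{2c^{-1/2}\ln 2}\,A^{3/4}$ admissible, which yields $F(H) \le c_1 A^{3/4}$ with $c_1 = \sqrt{2c^{-1/2}\ln 2}$.

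For the lower bound I would scramble the $u$ switches of one direction at random, exactly as in Theorem~\ref{thm:square}. Let $r_i$ denote the number of points of $H$ on the $i$-th of the $v$ lines of the transverse direction; then $\sum_i r_i = A$ and, since there are only $u$ lines in the scrambled direction, $r_i \le u$ for every $i$. Regardless of the initial entries, the scrambled $i$-th line $R_i$ has distribution $S_{r_i}$, so using a uniform bound $\E[|S_m|] \ge \tfrac{1}{\sqrt2}\sqrt m$ (valid for all $m\ge1$, e.g.\ by Khintchine's inequality) together with linearity of expectation, and then setting each transverse switch $x_i$ so that $x_iR_i = |R_i|$, gives
\[ \E\Bigl[\sum_i |R_i|\Bigr] \ge \frac{1}{\sqrt2}\sum_i \sqrt{r_i} \ge \frac{1}{\sqrt2}\sum_i \frac{r_i}{\sqrt u} = \frac{A}{\sqrt2\,\sqrt u} \ge \frac{A}{\sqrt2\,\sqrt{u+v}} \ge \frac{c^{1/4}}{\sqrt2}\,A^{3/4}, \]
where the middle step uses $\sqrt{r_i} \ge r_i/\sqrt u$ and the last uses the density hypothesis. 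The probabilistic method then produces a setting attaining at least this expectation, so $F(H) \ge c_2 A^{3/4}$ with $c_2 = c^{1/4}/\sqrt2$.

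The upper bound is routine once the switch count $u+v$ is identified. The real content of the lower bound is the chain $\sum_i \sqrt{r_i} \ge A/\sqrt u \ge c^{1/4}A^{3/4}$: the estimate $\E[|S_{r_i}|]\asymp\sqrt{r_i}$ is wasteful precisely when the mass of $H$ concentrates on a few very long lines, and the bound $r_i \le u$ combined with $u \le c^{-1/2}A^{1/2}$ is exactly what the density hypothesis contributes to forbid this. I expect the main obstacle to be organizing this inequality chain and tracking the dependence of the constants on $c$; the rest transfers essentially verbatim from Theorem~\ref{thm:square}.
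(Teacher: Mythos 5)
Your proposal is correct, and your upper bound coincides with the paper's: random signs on $H$, the $S_A$ tail bound, a union bound over the $2^{u+v}$ effective switch settings, and then the density hypothesis in the form $u+v\le c^{-1/2}A^{1/2}$ (your constant $\sqrt{2c^{-1/2}\ln 2}$ is exactly the paper's $c_1=(2\ln 2)^{1/2}c^{-1/4}$). Your lower bound, however, takes a genuinely different route. The paper first proves, by a counting/contradiction argument using $A\ge 4cuv$, that at least $cu$ rows each contain at least $cv$ points of $H$; it then keeps only those rows, getting $\Omega(cu\cdot\sqrt{cv})$, and finally needs the separate inequality $u\sqrt{v}\ge A^{3/4}$ (obtained by multiplying $uv\ge A$, $uv\ge A$, $u^2\ge uv\ge A$ under the normalization $u\ge v$). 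You instead keep \emph{all} lines and exploit the trivial cap $r_i\le u$ through the pointwise inequality $\sqrt{r_i}\ge r_i/\sqrt{u}$, so that $\sum_i\sqrt{r_i}\ge A/\sqrt{u}$, after which $u\le u+v\le c^{-1/2}A^{1/2}$ finishes immediately. Both arguments sit inside the same scramble-one-direction-then-align framework of Theorem~\ref{thm:square}, and your uniform bound $\E[|S_m|]\ge\sqrt{m/2}$ (Khintchine, sharp constant due to Szarek) is a legitimate substitute for the paper's $\E[|R_i|]=\Omega(\sqrt{v_i})$. What your version buys: it avoids the counting lemma and the WLOG $u\ge v$ entirely, it extracts discrepancy from every line rather than discarding the sparse ones, and it yields a better constant dependence, $c_2\sim c^{1/4}$ versus the paper's $c_2\sim c^{3/2}$ (note $c\le 1/4$, so $c^{1/4}>c^{3/2}$). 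What the paper's version buys is a structural statement of independent interest --- a dense board has a constant fraction of rows that are themselves dense --- which is reused implicitly in the applications of Section~\ref{subsec:applications}.
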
 
\begin{proof}
 \emph{Upper bound.}
 We employ the basic probabilistic method.
  For $(i,j) \in H$, let $a_{ij}=\pm1$ be chosen independently and at random with equal probability.
  For any fixed choice $x_i,y_j$, the terms $a_{ij} x_i y_j$ are mutually independent
  and equal to $\pm1$ with the same probability.  This means that 
  $\sum_{(i,j) \in H} a_{ij} x_i y_j$ has distribution $S_{A}$. 
  Set $\lambda = c_1 A^{3/4}$, where $c_1 = (2 \ln{2})^{1/2} \cdot c^{-1/4}$.
  Applying the deviation inequality~\eqref{eq:dev-S_n} while using the assumed lower bound on $A$ yields
  \[ \Pr \left[ \sum_{i=1}^n \sum_{j=1}^n  a_{ij} x_i y_j > \lambda \right]
  < \exp(-\lambda^2/2A) = \exp(-c_1^2 A^{1/2}/2) \leq 2^{-(u+v)}. \]
  There are $2^{u+v}$ choices for $x_i,y_j$, thus by the union bound we obtain
  \[ \Pr \left[ \sum_{i=1}^n \sum_{j=1}^n  a_{ij} x_i y_j > \lambda \text{ for some } x_i, y_j \right]
  < 2^{u+v} \cdot 2^{-(u+v)} =1. \]
  Consequently, there exists a matrix $[a_{ij}]$ such that
  \[ \sum_{i=1}^n \sum_{j=1}^n  a_{ij} x_i y_j  \leq \lambda = c_1 A^{3/4}, \text{  for all } x_i,y_j.  \]

\medskip
\emph{Lower bound.}
Let $H$ be any fixed dense board with $A \geq c (u+v)^2$, for some constant $c$.
This implies $A \geq c (u+v)^2 \geq 4 c uv$. 
We show that $F(H) \geq c_2 A^{3/4}$, for some constant $c_2>0$.
 Recall the setting in~\eqref{eq:proj}, where we may assume that $u \geq v$.
 We claim that there exist at least $cu$ rows each with at least $cv$ elements of $H$.
 Assume for contradiction that the above statement does not hold. Then there exist 
at least $(1-c)u$ rows each with fewer than $cv$ elements of $H$.
Since each row has at most $v$ elements of $H$, there are at most 
$cu$ other rows each with at most $v$ elements of $H$. Hence the total number
of elements in $H$ is less than
\[ (1-c) u \cdot cv + c uv=(2c -c^2) uv < 4 c uv, \]
a contradiction; this concludes the proof of the claim.

Some of the switches are set in a probabilistic way, whereas others are set deterministically, as follows.
Let the column switches $y_j=\pm1$ be selected at random and let
\[ R_i = \sum_{j \in H_y} a_{ij} y_j \]
be the ``scrambled'' $i$th row, $i \in H_x$, namely the $i$th row after the switches $y_j$.
For a fixed $i$, the $i$th row can be any of the $2^{v_i}$ possibilities, equally likely, where
$v_i \leq v$ is the number of elements of $H$ in row $i$. 
Hence the distribution of the row is uniform regardless of the
row's initial value, and $R_i$ has distribution $S_{v_i}$. It follows that
\[ \E \left[ |R_i| \right] = \Omega(\sqrt{v_i}). \]
Note that by the linearity of expectation, we have
\[ \E \left[ \sum_{i \in H_x} |R_i| \right] = \sum_{i \in H_x}  \E \left[ |R_i| \right]. \]

Suppose that we already have obtained (by our random choices) these inequalities
for each row of the matrix, as guaranteed by their expectations.  
Set now the row switches $x_i$ such that $x_i R_i = |R_i|$ for every $i$, \ie, such that
the row discrepancies result in a cumulative gain. Then
\[ \sum_{i \in H_x} \sum_{j \in H_y}   a_{ij} x_i y_j   = \sum_{i \in H_x } x_i R_i = \sum_{i \in H_x} |R_i|. \]
By the above claim, there exist at least $cu$ rows each with at least $cv$ elements of $H$.
For each such row, the discrepancy is $\Omega(\sqrt{cv}) = c ^{1/2} \, \Omega(\sqrt{v})$.
Consequently,
\[ F(H) = \Omega( c^{3/2} \cdot u\sqrt{v}) = \Omega( u\sqrt{v}) = \Omega(A^{3/4}), \]
as required. Indeed, the inequality $u \sqrt{v} \geq A^{3/4}$ immediately follows by multiplying the
following three inequalities (where $uv \geq A$ holds by definition, and $u \geq v$ holds by assumption)
and taking the $1/4$th power.
\[ uv \geq A, \ \ uv \geq A, \ \ u^2 \geq uv \geq A. \]
It is easily seen that one can take $c_2 \sim c^{3/2}$.
\end{proof}

\subsection{Two applications} \label{subsec:applications}

In this subsection we consider two specific examples of dense boards in Theorems~\ref{thm:45} and~\ref{thm:disk}.
A~board example for Theorem~\ref{thm:45} appears in Fig.~\ref{fig:boards}\,(right). 

\begin{theorem} \label{thm:45}
  Let $H$ be a maximal square whose sides are parallel to $y=\pm x$ that is inscribed in $G_n$.
  Then its maximum discrepancy $F(H)$ is $\Theta(n^{3/2})$, and more precisely,
\begin{align} 
  F(H) &\geq \left( 1+ o(1) \right) \frac23 \cdot \sqrt{\frac{2}{\pi}} \cdot n^{3/2}, \text{ and }\\ 
F(H) &\leq c n^{3/2}, \text{ where } c = (2\ln{2})^{1/2}. 
\end{align}
\end{theorem}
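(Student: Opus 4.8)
The plan is to treat $H$ as a particular dense board and combine the two halves of Theorem~\ref{thm:dense}, refining the lower-bound computation so as to extract the stated leading constant. First I would fix coordinates so that the inscribed diamond has its four vertices at the edge-midpoints of $G_n$, i.e.\ $H$ consists of the lattice points with $|i-\tfrac n2|+|j-\tfrac n2|\le \tfrac n2$. Then row $i$ (for $1\le i\le n$) meets $H$ in an interval of length $\ell_i=(1+o(1))\bigl(n-|2i-n|\bigr)=(1+o(1))\,2\min(i,n-i)$, the projections satisfy $u=v=n$, and the area is $A=|H|=(1+o(1))\,n^2/2$. These three facts — the row profile, the projection sizes, and the area — are the only geometric inputs needed; all boundary irregularities contribute lower-order terms that are absorbed into the $o(1)$.

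For the upper bound I would apply Theorem~\ref{thm:dense} directly, tracking constants. Here $c=A/(u+v)^2=(n^2/2)/(2n)^2=\tfrac18-o(1)$, so the constant of Theorem~\ref{thm:dense} is $c_1=(2\ln 2)^{1/2}c^{-1/4}=(2\ln 2)^{1/2}\,2^{3/4}$, and since $A^{3/4}=(n^2/2)^{3/4}=n^{3/2}2^{-3/4}$, we get $c_1A^{3/4}=(1+o(1))(2\ln 2)^{1/2}n^{3/2}$ because $2^{3/4}\cdot 2^{-3/4}=1$. Equivalently one reruns the probabilistic method of the upper-bound part of Theorem~\ref{thm:square}: with $a_{ij}$ chosen independently and uniformly in $\{-1,+1\}$ the sum $\sum_{(i,j)\in H}a_{ij}x_iy_j$ has distribution $S_A$, and~\eqref{eq:dev-S_n} together with the union bound over the $2^{u+v}=2^{2n}$ switch settings succeeds once $\lambda^2\ge 2A(u+v)\ln 2=(1+o(1))\,2n^3\ln 2$, giving a matrix with discrepancy at most $\lambda=(1+o(1))(2\ln 2)^{1/2}n^{3/2}$.

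For the lower bound I would follow the row-scrambling argument from the proof of Theorem~\ref{thm:dense}, but retain the exact row lengths instead of the crude ``$cu$ rows of size $cv$'' estimate. Selecting the column switches $y_j$ uniformly at random makes each scrambled row $R_i=\sum_{j}a_{ij}y_j$ distributed as $S_{\ell_i}$, so by~\eqref{eq:asy-S_n} we have $\E[|R_i|]=(1+o(1))\sqrt{2/\pi}\,\sqrt{\ell_i}$ for the long rows; setting each row switch $x_i$ so that $x_iR_i=|R_i|$ then gives $F(H)\ge\sum_i\E[|R_i|]=(1+o(1))\sqrt{2/\pi}\sum_i\sqrt{\ell_i}$. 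The remaining step is to evaluate $\sum_i\sqrt{\ell_i}$: substituting $t=i/n$ turns it into a Riemann sum for $n^{3/2}\int_0^1\sqrt{1-|2t-1|}\,dt$, and since $\int_0^{1/2}\sqrt{2t}\,dt=\tfrac13$ this integral equals $\tfrac23$, yielding the claimed bound $(1+o(1))\tfrac23\sqrt{2/\pi}\,n^{3/2}$.

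The main obstacle — indeed the only place where this departs from the generic dense-board proof — is obtaining the exact constant $\tfrac23$ rather than a mere $\Omega(n^{3/2})$. This forces two things to be checked. First, I must argue that the discrete lengths $\ell_i$ track the continuous profile $n-|2i-n|$ uniformly enough that the Riemann-sum approximation of $\sum_i\sqrt{\ell_i}$ is valid to relative error $o(1)$; this is routine given that $\sum_{i\le N}\sqrt i=(1+o(1))\tfrac23N^{3/2}$. Second, I must handle the short rows near the two acute vertices, where $\ell_i=O(1)$ and $\E[|R_i|]=\Theta(\sqrt{\ell_i})$ is \emph{not} tightly $\sqrt{2/\pi}\sqrt{\ell_i}$; but those rows number $O(\eps n)$ and each contributes $O(\sqrt{\eps n})$, for a total of $O(\eps^{3/2}n^{3/2})=o(n^{3/2})$, so their imprecise contribution is harmless provided this negligibility is stated explicitly. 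With both points in hand, the bulk of the sum comes from rows of length $\Theta(n)$, where~\eqref{eq:asy-S_n} applies with its exact constant, and the two displayed bounds follow.
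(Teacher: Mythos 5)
Your proposal is correct and follows essentially the same route as the paper: the lower bound scrambles the columns at random and sums $\E[|R_i|]=(1+o(1))\sqrt{2/\pi}\,\sqrt{\ell_i}$ over the rows, evaluating $\sum_i\sqrt{\ell_i}$ by the same integral to get the constant $\tfrac23$, and the upper bound is the union-bound/deviation argument of Theorem~\ref{thm:dense} with the constants tracked for $A=(1+o(1))n^2/2$ and $u=v=n$, yielding $c=(2\ln 2)^{1/2}$. Your explicit treatment of the short rows near the vertices is a small point of extra care that the paper leaves implicit in its $(1+o(1))$ factors, but it is the same argument.
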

\begin{proof}
\emph{Lower bound.}
  We proceed as in the proof of Theorem~\ref{thm:square}.
  Let $k=n/2$. Note that
 \[ \int_0^k \sqrt{x} \intd x = \left. \frac{x^{3/2}}{3/2} \right \vert_0^k = \frac23 \, k^{3/2}. \]
By the specifics of our board, as in the proof of Theorem~\ref{thm:dense}, we obtain
\begin{align*}
\E \left[ \sum_{i=1}^n |R_i| \right]  &=\left(1+ o(1) \right) \sqrt{\frac{2}{\pi}} \cdot 2 \sum_{i=1}^k \sqrt{2i} 
= \left(1+ o(1) \right) \sqrt{\frac{2}{\pi}} \cdot 2 \sqrt2 \, \int_0^k \sqrt{x} \intd x \\
&= \left(1+ o(1) \right) \sqrt{\frac{2}{\pi}} \cdot 2 \sqrt2 \cdot \frac23 \cdot \frac{n^{3/2}}{2 \sqrt2}
= \left(1+ o(1) \right) \frac23 \cdot \sqrt{\frac{2}{\pi}} \cdot n^{3/2}.
\end{align*}

\smallskip
\noindent \emph{Upper bound.}
  An upper bound $F(H) =O(n^{3/2})$ follows from Theorem~\ref{thm:dense}. Indeed,
  we have $A=(1+o(1)) n^2/2$, whence $A^{3/4} =\Theta(n^{3/2})$. More precisely, the inequality
  \[  \exp(-\lambda^2/2A) = \exp(-c_1^2 A^{1/2}/2) \leq 2^{-(u+v)}, \]
  where $u=v=n$, is satisfied by setting $\exp(c_1^2 A^{1/2}/2) = 2^{2n}$, which yields 
$c_1= 2^{5/4} (\ln 2)^{1/2}$ and $c = (2 \ln{2})^{1/2}$. 
\end{proof}

For the next result, recall that the Gamma function is given by the improper integral
\begin{equation} \label{eq:Gamma}
  \Gamma(s) = \int_0^\infty t^{s-1} e^{-t} \intd t.
\end{equation}
It is known that $\Gamma(s+1)= s \, \Gamma(s)$.
In particular, $\Gamma(-0.25)= -4 \, \Gamma(0.75)$, and
 $\Gamma(1.75)= 0.75 \, \Gamma(0.75)$, where $\Gamma(0.75)= 1.225\ldots$.
For every positive integer $n$, $\Gamma(n)=(n-1)!$.

\begin{theorem} \label{thm:disk}
  Let $H$ be a maximal disk that is inscribed in $G_n$.
  Then its maximum discrepancy $F(H)$ is $\Theta(n^{3/2})$, and more precisely,
\begin{align} 
  F(H) &\geq \left(1+ o(1) \right) \left( \frac{\pi}{|\Gamma(-0.25)| \cdot \Gamma(1.75)} \right) n^{3/2},
\text{ and }\\ 
F(H) &\leq c n^{3/2}, \text{ where } c= \pi^{1/4}  (\ln 2)^{1/2}.
\end{align}
\end{theorem}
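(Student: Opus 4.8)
The plan is to treat the inscribed disk as a concrete dense board and reuse the two-sided template of Theorems~\ref{thm:square} and~\ref{thm:dense}: a ``scramble the columns, then align the rows'' argument for the lower bound, and the union-bound / deviation-inequality argument (equivalently, Theorem~\ref{thm:dense}) for the upper bound. The only genuinely new ingredient is the geometry of the disk, which enters through the row-length profile $v_i$ and forces one explicit integral evaluation.

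For the lower bound I would first choose the column switches $y_j = \pm 1$ uniformly at random. Exactly as before, the scrambled row $R_i = \sum_j a_{ij} y_j$ has distribution $S_{v_i}$, where $v_i$ is the number of lattice points of $H$ in row $i$, so $\E[|R_i|] = (1+o(1))\sqrt{2/\pi}\,\sqrt{v_i}$; setting each row switch $x_i$ to align its row gives $F(H) \ge (1+o(1))\sqrt{2/\pi}\,\sum_i \sqrt{v_i}$. The disk-specific input is that a row at signed distance $y$ from the center of a disk of radius $r = n/2$ meets $H$ in a chord, so up to negligible boundary effects $v_i = (1+o(1))\,2\sqrt{r^2 - y^2}$. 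Passing from the sum over rows to an integral then yields
\[ \sum_i \sqrt{v_i} = (1+o(1))\,\sqrt{2}\int_{-r}^{r}\bigl(r^2 - y^2\bigr)^{1/4}\intd y. \]

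The crux, and where I expect the real effort, is evaluating this integral in closed form and collapsing it to the stated Gamma-function constant. I would substitute $y = r\sin\theta$ to obtain $r^{3/2}\int_{-\pi/2}^{\pi/2}\cos^{3/2}\theta\intd\theta$, recognize the latter as the Beta integral $\Gamma(5/4)\,\Gamma(1/2)/\Gamma(7/4)$, and then simplify via the recurrence $\Gamma(s+1) = s\,\Gamma(s)$ (replacing $\Gamma(5/4),\Gamma(7/4)$ by $\Gamma(1/4),\Gamma(3/4)$) together with the reflection value $\Gamma(1/4)\,\Gamma(3/4) = \pi/\sin(\pi/4) = \pi\sqrt{2}$. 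With $r = n/2$ the stray powers of $2$ and $\pi$ cancel and the coefficient collapses to $\pi/(3\,\Gamma(3/4)^2)$, which is precisely $\pi/\bigl(|\Gamma(-0.25)|\cdot\Gamma(1.75)\bigr)$ by the two identities recorded before the theorem. Carefully tracking the $(1+o(1))$ factors through the Riemann-sum-to-integral passage (the extreme rows, where $v_i$ is small, contribute negligibly) is the main bookkeeping obstacle.

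For the upper bound I would invoke Theorem~\ref{thm:dense} directly: the disk is dense with $A = \area(H) = (1+o(1))\,\pi n^2/4$ and $u = v = n$, so $A^{3/4} = \Theta(n^{3/2})$ already gives $F(H) = O(n^{3/2})$. To pin down the constant I would rerun the union bound over the $2^{u+v} = 2^{2n}$ choices of $(x_i, y_j)$ with $\lambda = c\,n^{3/2}$, requiring $\exp(-\lambda^2/(2A)) \le 2^{-2n}$. Since $\lambda^2/(2A) = 2c^2 n/\pi$, this collapses to the single inequality $c^2 \ge \pi\ln 2$, which fixes the constant; no obstacle arises here beyond substituting the disk's area into the computation already carried out for the tilted square in Theorem~\ref{thm:45}.
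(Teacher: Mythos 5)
Your proposal is correct and follows essentially the same route as the paper on both sides: the lower bound is the same scramble-the-columns/align-the-rows argument with the chord profile $v_i = (1+o(1))\,2\sqrt{r^2-y^2}$, reduced to the integral $r^{3/2}\int_{-\pi/2}^{\pi/2}\cos^{3/2}\theta\intd\theta$, and the upper bound is the same union-bound computation with $A=(1+o(1))\,\pi n^2/4$ and $u=v=n$. (The paper evaluates the $\cos^{3/2}$ integral by computer algebra; your Beta-integral/reflection-formula derivation of $\pi/\bigl(3\,\Gamma(3/4)^2\bigr)$ is a self-contained substitute, and this constant does equal $\pi/\bigl(|\Gamma(-0.25)|\cdot\Gamma(1.75)\bigr)$ by the identities recorded before the theorem.)

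One substantive discrepancy deserves attention: your upper-bound computation gives $c=(\pi\ln 2)^{1/2}\approx 1.48$, not the stated $c=\pi^{1/4}(\ln 2)^{1/2}\approx 1.11$, and your value is the one this method actually yields. The union bound requires $\lambda^2 \geq 4An\ln 2 = \pi n^3\ln 2$, i.e.\ $\lambda \geq (\pi\ln 2)^{1/2}\, n^{3/2}$; the smaller printed constant would require $\pi^{-1/2}\geq 1$, which is false. The source of the mismatch is an algebra slip in the paper itself: solving $\exp(c_1^2A^{1/2}/2)=2^{2n}$ with $A^{1/2}=(1+o(1))\sqrt{\pi}\,n/2$ gives $c_1 = 2^{3/2}\pi^{-1/4}(\ln 2)^{1/2}$, whereas the paper writes $c_1 = 2^{3/2}\pi^{-1/2}(\ln 2)^{1/2}$, and this error propagates to the claimed $c$. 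So your proof (exactly like the paper's) establishes the theorem with upper-bound constant $(\pi\ln 2)^{1/2}$; the constant printed in the statement is not achievable by this argument. Everything else in your proposal, including the observation that the short rows near the top and bottom of the disk contribute negligibly to the sum $\sum_i\sqrt{v_i}$, is sound.
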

\begin{proof}
\emph{Lower bound.}
We proceed as in the proofs of Theorem~\ref{thm:square} and~\ref{thm:45}. 
  Let $k=n/2$. A~standard computational system such as \emph{WolframAlpha} shows that 
  \begin{align*}
    \int_0^k (k^2 -x^2)^{1/4}  \intd x  &= \int_0^{\pi/2} (k^2 \cos^2 t)^{1/4} k \cos t \intd t \\
   &= k^{3/2} \int_0^{\pi/2} \cos^{3/2} t \intd t =
    \frac{\pi^{3/2} k^{3/2}}{2^{1/2} |\Gamma(-0.25)| \cdot \Gamma(1.75)}.
  \end{align*}
The first two steps in the above chain are the result of a change of variable: $x=k \sin t$.
Note that $\Gamma(-0.25)=-4.901\ldots<0$ and $\Gamma(1.75)=0.919\ldots$.

By the specifics of our board, as in the proof of Theorem~\ref{thm:dense}, we obtain
\begin{align*}
\E \left[ \sum_{i=1}^n |R_i| \right] &=\left(1+ o(1) \right) \sqrt{\frac{2}{\pi}} \cdot 2 \sqrt2 \cdot \sum_{i=1}^k (k^2 -i^2)^{1/4} \\
&= \left(1+ o(1) \right) \sqrt{\frac{2}{\pi}} \cdot 2 \sqrt2 \cdot \int_0^k (k^2 - x^2)^{1/4} \intd x \\
&= \left(1+ o(1) \right) \sqrt{\frac{2}{\pi}} \cdot 2 \sqrt2 \cdot
\frac{\pi^{3/2} k^{3/2}}{2^{1/2} |\Gamma(-0.25)| \cdot \Gamma(1.75)} \\
&= \left(1+ o(1) \right) (2 \sqrt2) \cdot \frac{ \pi \, n^{3/2}}{(2 \sqrt2) |\Gamma(-0.25)| \cdot \Gamma(1.75)}\\
&= \left(1+ o(1) \right) \cdot \frac{\pi}{|\Gamma(-0.25)| \cdot \Gamma(1.75)} \cdot n^{3/2}. 
\end{align*}

\smallskip
\noindent \emph{Upper bound.}
We have $A=(1+o(1)) \pi n^2/4$ and $u=v=n$. For satisfying 
\[  \exp(-\lambda^2/2A) = \exp(-c_1^2 A^{1/2}/2) \leq 2^{-(u+v)}, \]
 we set $\exp(c_1^2 A^{1/2}/2) = 2^{2n}$, which yields
$c_1 = 2^{3/2} \pi^{-1/2} (\ln 2)^{1/2}$ and $c = \pi^{1/4}  (\ln 2)^{1/2} $.
\end{proof}

\section{Other boards} \label{sec:other}

\subsection{A special planar board} \label{sec:special}

In this section we consider an interesting case where applying the previous techniques does not yield a tight bound
on the signed discrepancy. However, the resulting bounds are only a $O((\log n)^{1/2})$ factor away from each other.

\begin{theorem} \label{thm:hyperbola}
  Let $H$ consist of the elements of $G_n$ on or below the hyperbola $xy=n$.
  Then its maximum discrepancy is $\Omega(n)$ and $O(n (\log n)^{1/2})$. 
\end{theorem}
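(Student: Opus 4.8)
The plan is to reuse the two complementary techniques from the proofs of Theorem~\ref{thm:square} and Theorem~\ref{thm:dense}, now specialized to the geometry of the hyperbolic board. First I would record the parameters. A point $(i,j)$ lies in $H$ exactly when $ij \le n$, so row $i$ contains $v_i = \lfloor n/i \rfloor$ elements of $H$; since $(i,1) \in H$ for every $i$, all rows and all columns are nonempty, whence $u = v = n$. The area is the classical count of lattice points under a hyperbola,
\[ A = |H| = \sum_{i=1}^{n} \left\lfloor \frac{n}{i} \right\rfloor = \Theta(n \log n), \]
which is far below $(u+v)^2 = 4n^2$; thus $H$ is \emph{not} dense, and this mismatch is exactly what will force the two bounds apart.

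For the lower bound I would run the column-scrambling argument of Theorem~\ref{thm:dense} essentially verbatim. Selecting the column switches $y_j = \pm 1$ uniformly at random makes each scrambled row $R_i = \sum_{j \in H_y} a_{ij} y_j$ distributed as $S_{v_i}$, so $\E[|R_i|] = \Omega(\sqrt{v_i})$; setting each row switch $x_i$ to align signs then yields a configuration of discrepancy $\sum_i |R_i|$. By linearity of expectation,
\[ F(H) \ge \E\!\left[\sum_{i=1}^n |R_i|\right] = \Omega\!\left(\sum_{i=1}^n \sqrt{\lfloor n/i\rfloor}\right) = \Omega\!\left(\sqrt{n}\, \sum_{i=1}^n i^{-1/2}\right) = \Omega(n), \]
using $\sum_{i=1}^n i^{-1/2} = \Theta(\sqrt{n})$.

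For the upper bound I would apply the probabilistic union-bound argument of Theorem~\ref{thm:square}. Choosing the entries $a_{ij}$ independently and uniformly from $\{-1,+1\}$, for each fixed choice of $x_i, y_j$ the sum $\sum_{(i,j)\in H} a_{ij}x_iy_j$ is distributed as $S_A$, so the deviation inequality~\eqref{eq:dev-S_n} gives probability $< \exp(-\lambda^2/2A)$ of exceeding $\lambda$. Taking a union bound over the $2^{u+v} = 2^{2n}$ switch settings, I would pick $\lambda$ so that $\exp(-\lambda^2/2A)\cdot 2^{2n} < 1$, that is $\lambda = \Theta(\sqrt{A\,n}) = \Theta(\sqrt{n \cdot n\log n}) = O(n(\log n)^{1/2})$. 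This produces a matrix whose discrepancy is $O(n(\log n)^{1/2})$ for every setting of the switches.

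There is no single hard computation here; each estimate is routine given the two established proofs. The genuine obstacle is conceptual rather than technical: because $H$ is not dense, the lower-bound scale $\sum_i \sqrt{v_i} = \Theta(n)$ and the upper-bound scale $\sqrt{A(u+v)} = \Theta(n\sqrt{\log n})$ no longer coincide (as they do when $A \asymp (u+v)^2$), so the two methods leave a $\Theta((\log n)^{1/2})$ gap that I do not expect to close with these tools alone.
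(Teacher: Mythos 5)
Your proposal is correct and takes essentially the same route as the paper: the lower bound via random column switches followed by sign-aligning row switches, using $\sum_{i=1}^n \sqrt{n/i} = \sqrt{n}\cdot\Theta\bigl(\sum_{i=1}^n i^{-1/2}\bigr) = \Theta(n)$, and the upper bound via the union-bound calibration $\exp(-\lambda^2/2A) \le 2^{-2n}$ with $A = \Theta(n\log n)$, giving $\lambda \sim n(\log n)^{1/2}$. Your closing remark about the non-density of $H$ being the source of the $(\log n)^{1/2}$ gap matches the paper's own discussion, which likewise leaves closing this gap as an open question.
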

\begin{proof}
\emph{Lower bound.}
We proceed as in the proofs of Theorem~\ref{thm:square} and~\ref{thm:45}.
We will need the following well-known estimate:
\begin{equation} \label{eq:int-1/2}
  \sum_{i=1}^n i^{-1/2} = \Theta \left( \int_1^n  x^{-1/2} \intd x \right) = \Theta (n^{1/2}).
\end{equation}
By using~\eqref{eq:int-1/2} we obtain
\begin{equation*}
\E \left[ \sum_{i=1}^n |R_i| \right] = \Omega \left( \sum_{i=1}^n \sqrt{\frac{n}{i}} \right) 
= \sqrt{n} \cdot \Omega \left(\sum_{i=1}^n i^{-1/2} \right) = \Omega(n).
\end{equation*}

(An alternative deterministic lower bound solution is as follows: 
Turn every entry into $+1$ in the row with $n$ entries, and turn each other row non-negative.
It yields that the maximum discrepancy is $\geq n$.)

\medskip
\noindent \emph{Upper bound.}
We have
\[ A= \Theta \left( \sum_{i=1}^n \frac{n}{i} \right)  = \Theta (n \log{n}), 
\text{ and } u=v=n.  \]
For satisfying $\exp(-\lambda^2/2A) \leq 2^{-(u+v)} = 2^{-2n}$,
set $\lambda \sim n (\log{n})^{1/2}$, completing the proof.
\end{proof}

 \subsection{The cubic board} \label{sec:cube}

Consider the $3$-dimensional cubic board  $H= \{1,\ldots,n\}^3$ equipped with $3n^2$ line switches:
\begin{align*}
\ell_{ij} \ \colon \ x_i &=i, \ y_j =j, \ \ i,j=1,\ldots,n;  \\
\ell_{ik} \ \colon \ x_i &=i, \ z_k =k, \ \ i,k=1,\ldots,n;  \text{ and } \\
\ell_{jk} \ \colon \ y_j &=j, \ z_k =k, \ \ j,k=1,\ldots,n.
\end{align*}

\begin{theorem} \label{thm:cube} 
  Let $H= \{1,\ldots,n\}^3$.
  Then the maximum discrepancy $F(H)$ is $\Theta(n^{5/2})$, and more precisely,
\begin{align} 
F(H) &\leq c n^{5/2}, \text{ where } c = (6 \ln{2})^{1/2},  \text{ and }  \label{eq:ub-cube} \\
F(H) &\geq \left( 1+ o(1) \right) \sqrt{\frac{2}{\pi}}  \, n^{5/2}.  \label{eq:lb-cube} 
\end{align}
\end{theorem}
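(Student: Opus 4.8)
The plan is to mirror the proof of Theorem~\ref{thm:square} almost verbatim, with the $3n^2$ line switches playing the role of the $2n$ row/column switches and the $n^3$ lattice entries playing the role of the $n^2$ array entries. First I would fix notation: writing $p_{jk},q_{ik},r_{ij}\in\{\pm1\}$ for the switch values on the lines $\ell_{jk},\ell_{ik},\ell_{ij}$, respectively, the value deposited at a point $(i,j,k)$ is $a_{ijk}\,p_{jk}\,q_{ik}\,r_{ij}$, since each point lies on exactly one line from each of the three families. Hence
\[ f(H)=\sum_{i=1}^n\sum_{j=1}^n\sum_{k=1}^n a_{ijk}\,p_{jk}\,q_{ik}\,r_{ij}, \]
and $F(H)$ is the min--max of this expression. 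The two numerical facts driving everything are $|H|=n^3$ and that there are exactly $2^{3n^2}$ switch settings.

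\emph{Upper bound.} I would choose the $a_{ijk}$ independently and uniformly in $\{\pm1\}$. For any fixed switch setting the terms $a_{ijk}p_{jk}q_{ik}r_{ij}$ are mutually independent and uniform in $\{\pm1\}$, so $f(H)$ has distribution $S_{n^3}$. Setting $\lambda=cn^{5/2}$ with $c=(6\ln 2)^{1/2}$, the deviation inequality~\eqref{eq:dev-S_n} gives
\[ \Pr\!\left[f(H)>\lambda\right]<\exp\!\left(-\lambda^2/(2n^3)\right)=\exp\!\left(-c^2 n^2/2\right)=2^{-3n^2}. \]
A union bound over the $2^{3n^2}$ settings yields probability $<1$ that some setting exceeds $\lambda$, so there is a matrix $[a_{ijk}]$ keeping every setting $\le cn^{5/2}$, which establishes~\eqref{eq:ub-cube}.

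\emph{Lower bound.} Here I would fix $[a_{ijk}]$ and use only two of the three switch families, exactly as Theorem~\ref{thm:square} uses column switches to scramble and row switches to collect. Pull the $z$-direction switches $r_{ij}$ at random (taking $q\equiv 1$, which only weakens the attainable maximum and hence is legitimate), and for each $x$-direction line $\ell_{jk}$ set $R_{jk}=\sum_{i=1}^n a_{ijk}\,r_{ij}$. For fixed $(j,k)$ the summands use the distinct independent uniform variables $r_{1j},\ldots,r_{nj}$, so each $R_{jk}$ has distribution $S_n$ regardless of the initial entries, whence $\E[|R_{jk}|]=(1+o(1))\sqrt{\frac{2}{\pi}}\,n^{1/2}$ by~\eqref{eq:asy-S_n}. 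By linearity over the $n^2$ lines,
\[ \E\!\left[\sum_{j,k}|R_{jk}|\right]=n^2\cdot(1+o(1))\sqrt{\frac{2}{\pi}}\,n^{1/2}=(1+o(1))\sqrt{\frac{2}{\pi}}\,n^{5/2}. \]
Fixing a realization of the $r_{ij}$ meeting this expectation and then setting each collecting switch $p_{jk}$ so that $p_{jk}R_{jk}=|R_{jk}|$ gives $f(H)=\sum_{j,k}p_{jk}R_{jk}=\sum_{j,k}|R_{jk}|$, which proves~\eqref{eq:lb-cube}.

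The only genuinely new point, and the step I would be most careful about, is the distributional claim in the lower bound. A single switch $r_{ij}$ is shared by all lines $\ell_{jk}$ with that fixed $(i,j)$ (that is, across all heights $k$), so the family $\{R_{jk}\}$ is \emph{not} mutually independent. What rescues the argument is that independence across lines is never invoked: linearity of expectation only requires each individual $R_{jk}$ to be $S_n$-distributed, and this holds because, for fixed $(j,k)$, the $n$ summands draw on the $n$ distinct independent switches $r_{1j},\ldots,r_{nj}$. I would state this observation explicitly to justify reusing~\eqref{eq:asy-S_n} line by line, after which the remaining calibrations are the routine constant-chasing already carried out in Theorems~\ref{thm:square} and~\ref{thm:dense}.
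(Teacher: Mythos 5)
Your proposal is correct and takes essentially the same approach as the paper: the upper bound is identical, and your lower bound---scrambling the vertical switches $r_{ij}$ at random and collecting along the lines $\ell_{jk}$ while leaving the third family idle---is precisely the paper's strategy of playing $n$ independent copies of the square-board game (Theorem~\ref{thm:square}) on parallel planes, just sliced along the $y$-axis instead of the $z$-axis and with the square-board argument inlined rather than invoked as a black box. The dependence among the $R_{jk}$ that you carefully flag is the same (harmless) within-plane dependence already present in the proof of Theorem~\ref{thm:square}, and since your scrambling and collecting switches for distinct values of $j$ are disjoint, your argument decomposes into exactly the paper's independent per-plane games.
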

\begin{proof}
\emph{Upper bound.}
 We employ the basic probabilistic method~\cite{AS16}.
  Let $a_{ijk}=\pm1$ be chosen independently and at random with equal probability.
  For any fixed choice $\ell_{ij},\ell_{ik},\ell_{jk}$, the terms
  $a_{ijk} \ell_{ij} \ell_{ik} \ell_{jk}$ are mutually independent
  and equal to $\pm1$ with the same probability.  This means that 
  \[ \sum_{i=1}^n \sum_{j=1}^n  \sum_{k=1}^n a_{ijk} \ell_{ij} \ell_{ik} \ell_{jk} \]
  has distribution $S_{n^3}$.
  Recall that $c = (6 \ln{2})^{1/2}$ and  set $\lambda = c n^{5/2}$.  Applying the deviation inequality yields
  \[ \Pr \left[ \sum_{i=1}^n \sum_{j=1}^n  \sum_{k=1}^n a_{ijk} \, \ell_{ij} \, \ell_{ik} \, \ell_{jk} > \lambda \right]
  < \exp(-\lambda^2/2n^3) = 2^{-3n^2}. \]
  There are $2^{3n^2}$ choices for the $3n^2$ lines, thus by the union bound we obtain
  \[ \Pr \left[ \sum_{i=1}^n \sum_{j=1}^n  \sum_{k=1}^n a_{ijk} \, \ell_{ij} \, \ell_{ik} \, \ell_{jk} > \lambda
    \text{ for some line }\right] < 2^{3n^2} \cdot 2^{-3n^2} =1. \]
  Consequently, there exists a matrix
  $[a_{ijk}]$ such that
  \[ \sum_{i=1}^n \sum_{j=1}^n  \sum_{k=1}^n a_{ijk} \, \ell_{ij} \, \ell_{ik} \, \ell_{jk} \leq \lambda = c n^{5/2} \]
  for all $3n^2$ switches of the lines $\ell_{ij}$, $\ell_{ik}$, $\ell_{jk}$. 

 \smallskip
 \emph{Lower bound.} The strategy can be viewed as adding up the discrepancies of $n$ square boards,
 over $n$ planes, $z=1,\ldots,n$, of the cubic lattice, and then using the lower bound in Theorem~\ref{thm:square}
 for each one. Note that we use only $2n^2$ out of the $3n^2$ switches when playing the game (\ie, $2n$ for each
 horizontal plane). 
\end{proof}

\subsection{Open questions} \label{sec:open}

Two interesting questions remain:  

\begin{enumerate}  \itemsep 2pt

\item Is it possible to close the gap between the asymptotic upper and lower bounds in the
  original version of the game? In particular, is it true that $F(n) \geq (1 + o(1)) \, n^{3/2}$?

\item What is the maximum discrepancy achievable in the game on the hyperbolic planar board
  considered in Theorem~\ref{thm:hyperbola}?
Is it $\sim n$, or $\sim n \, \sqrt{\log n}$ or in between?

\end{enumerate}

\end{document}